% ------------------------------------------------------------------------
% bjourdoc.tex for birkjour.cls*******************************************
% ------------------------------------------------------------------------
%%%%%%%%%%%%%%%%%%%%%%%%%%%%%%%%%%%%%%%%%%%%%%%%%%%%%%%%%%%%%%%%%%%%%%%%%%

\documentclass{birkjour}
%
%
% THEOREM Environments (Examples)-----------------------------------------
%
 \newtheorem{theorem}{Theorem}[section]
 
 \newtheorem{lemma}[theorem]{Lemma}
 \newtheorem{proposition}[theorem]{Proposition}
 \theoremstyle{definition}
 \newtheorem{definition}[theorem]{Definition}
 \theoremstyle{remark}
 \newtheorem{remark}[theorem]{Remark}
 \newtheorem{example}{Example}
 \numberwithin{equation}{section}
\usepackage{hyperref}
\usepackage{mathrsfs}
\usepackage{amsmath,geometry,amssymb}
\usepackage{graphicx}

\begin{document}

%-------------------------------------------------------------------------
% editorial commands: to be inserted by the editorial office
%
%\firstpage{1} \volume{228} \Copyrightyear{2004} \DOI{003-0001}
%
%
%\seriesextra{Just an add-on}
%\seriesextraline{This is the Concrete Title of this Book\br H.E. R and S.T.C. W, Eds.}
%
% for journals:
%
%\firstpage{1}
%\issuenumber{1}
%\Volumeandyear{1 (2004)}
%\Copyrightyear{2004}
%\DOI{003-xxxx-y}
%\Signet
%\commby{inhouse}
%\submitted{March 14, 2003}
%\received{March 16, 2000}
%\revised{June 1, 2000}
%\accepted{July 22, 2000}
%
%
%
%---------------------------------------------------------------------------
%Insert here the title, affiliations and abstract:
%

\title[]
 {Warped product Quasi Bi-slant Submanifolds of Kaehler Manifolds}

%----------Author 1
\author[Lone]{Mehraj Ahmad Lone$^{*}$}

\address{%
	Department of Mathematics,\\National Institute of Technology Srinagar, \\
	190006, Kashmir, India.}

\email{mehrajlone@nitsri.ac.in}

\author[Majeed]{Prince Majeed}

\address{%
	Department of Mathematics,\\National Institute of Technology Srinagar, \\
	190006, Kashmir, India.}

\email{prince$_{-}$05phd19@nitsri.net}

%\thanks{This work was completed with the support of our
%\TeX-pert.}
%----------Author 2

%----------classification, keywords, date
\subjclass{53C15, 53C40, 53C42}

\keywords{Hemi-slant submanifolds, Quasi bi-slant submanifolds, Warped products.}

%\date{March 30, 2017}
%----------additions
%\dedicatory{To my boss}
%%% ----------------------------------------------------------------------

\begin{abstract}
In this paper, we introduce the notion of warped product quasi bi-slant submanifolds in Kaehler manifolds. We have shown that every warped product quasi bi-slant submanifold in a Kaehler manifold is either a Riemannian product or a warped product quasi hemi slant submanifold. Furthermore, we provide examples for  both cases. 
\end{abstract}

%%% ----------------------------------------------------------------------
\maketitle
%%% ----------------------------------------------------------------------

\section{\protect \bigskip Introduction} 	
\par  Chen\cite{chen2005} introduced the  notion of slant submanifolds, the initial findings on slant submanifolds were collected in his book \cite{chen book}. Numerous geometer groups continue to study and conduct research on this idea of submanifolds. Recently, the related literature of slant submanifolds has been compiled in the form of two books by Chen, Shahid and Solamy (see \cite{chen a,chen b}). During last decade,  many generalizations and extensions of slant submanifolds have been introduced, like: semi-slant, pointwise slant, hemi-slant, pointwise hemislant and many more. The related literature of these kind of generalizations can be be found in (see, \cite{Garay,Etayo,VKhan,Lone,stonvic}).  A more generic class of submanifolds in the form of bi-slant submanifolds was introduced by Cabrerizo and Cariazo \cite{Cabrerizo}. This class of submanifolds acts as a natural generalization of CR, semi-slant,  slant,  hemi-slant submanifolds \cite{VKhan,Lone,papaghuic}. Further the extended notion of pointwise bi-slant submanifolds of Kaehler manifolds can be found in \cite{Uddin2}. Etayo\cite{Etayo} introduced the idea of pointwise slant submanifolds as an extension of slant submanifolds and gave them the label quasi-slant submanifolds. Prasad, Shukla, and Haseeb \cite{prasad2} recently proposed the notion of quasi hemi-slant submanifolds of Kaehler manifolds. This notion of quasi hemi-slant submanifolds was generalised by Prasad, Akyol, Verma, and Kumar \cite{Prasad1} to a more generic class of submanifolds in the form of quasi bi-slant submanifolds of Kaehler manifolds.
They established the prerequisites for the integrability of the distributions used in the definition of such submanifolds.

\par Bishop and O’Neill in $1960s$ introduced the concept of warped product manifolds. These manifolds find their applications both in physics as well as in Mathematics.  Since then the study of warped product submanifolds has been investigated by many geometers (see, \cite{solamy,ch,chnn,Chnnn}).  In particular,  Chen started looking these warped products as submanifolds of different kinds of manifolds (see, \cite{5,6}). In  Kaehlerian settings, he proved  besides CR- products the non-existence of warped products  of the form $N^\perp \times_f N^T$,  where $N^\perp,$ $N^T$  is a totally real  and holomorphic submanifold,  respectively.  Now from the past two decades this area of research is an active area of research among many of the geometers and theoretical physicists.  For the overall development of the subject we refer\cite{Chenslant} .\\

\par Now while importing the survey of warped products to  slant cases,  Sahin  in \cite{B.sahin} proved the non-existence of  semi-slant warped products in any Kaehler manifold. Then in \cite{B.sahinn} he extended the study to pointwise semi-slant warped products of Kaeherian manifolds. Uddin, Chen and Solamy \cite{Uddin} studied warped product bi-slant submanifolds in Kaehler manifolds. In this paper we have studied the notion of warped product quasi bi-slant submanifolds in Kaehler manifolds, we proved that every warped product quasi bi-slant submanifold in a Kaehler manifold is either a Riemannian product or a warped product quasi hemi slant submanifold. Moreover, we provide the examples of both the cases. 
 
\section{Preliminaries}
Let ($\bar{M},J,g$) be an almost Hermitian manifold with an almost complex structure $J$ and a Riemannian metric $g$ such that
\begin{align}\label{2.1}
	J^2 = -I,
\end{align} 
\begin{align}\label{2.2}
	g(JX,JY) = g(X,Y)
\end{align}
for any $X, Y \in \Gamma(T\bar{M})$, where $I$ is the identity map and $\Gamma(T\bar{M})$ denotes the set of all vector fields of $\bar{M}$. Let $\bar{\nabla}$ denotes the Levi-Civita connection on $\bar{M}$ with respect to the Riemannian metric $g$. If the almost complex structure $J$ satifies
\begin{align}\label{2.3}
	(\bar{\nabla}_XJ)Y = 0,
\end{align}
 for any vector $ X, Y \in \Gamma(T\bar{M})$, the $\bar{M}$ is called a Kaehler manifold.\\
Let $M$ be a Riemannian manifold isometrically immersed in $\bar{M}$ and we denote by the symbol $g$ the Riemannian metric induced on $M$. Let $\Gamma(TM)$ denote the Lie algebra of vector fields in $M$ and $\Gamma(T^\perp M)$, the set of all vector fields normal to $M$. If $\nabla$ be the induced Levi-Civita connection on $M$, the Gauss and Weingarten formulas are respectively given by 
\begin{align}\label{2.4}
	\bar{\nabla}_XY = \nabla_XY + \sigma(X,Y),
\end{align}
and 
\begin{align}\label{2.5}
	\bar{\nabla}_XN = -A_NX + \nabla^\perp_XN,
\end{align}
for any $ X,Y \in \Gamma(TM)$ and $N \in \Gamma(T^\perp M)$, where $\nabla^\perp$ is the normal connection on $T^\perp M$ and $A$ the shape operator. The shape operator and the second fundamental form of $M$ are related by
\begin{align}\label{2.6}
	g(A_NX,Y) = g(\sigma(X,Y),N),
\end{align}
for any $ X,Y \in \Gamma(TM)$ and $N \in \Gamma(T^\perp M)$, and $g$ denotes the induced metric on $M$ as well as the metric on $\bar{M}$.\\
For a tangent vector field $X$ and a normal vector field $N$ of $M$, we can write
\begin{align}\label{2.7}
	JX = \phi X + \omega X,
\end{align}
where $\phi X$ and $\omega X$ are the tangential and normal components of $JX$ on $M$ respectively. Similarly for $N \in \Gamma(T^{\perp}N)$, we have
\begin{align}\label{2.8}
	 JN = BN +CN,
\end{align}
where $BN$ and $CN$ are tangential and normal components of $JN$ on $M$ respectively. Moreover, from (\ref{2.2}), (\ref{2.7}) and (\ref{2.8}), we have
\begin{align}\label{2.9}
	g(TX,Y) = g(X,TY),
\end{align}
for any $X ,Y \in \Gamma(TM)$.\\
We can now specify the following classes of submanifolds of Hermitian manifolds for later use:\\
(1) A submanifold $M$ of an almost Hermitian manifold $\bar{M}$ is said to be slant (see \cite{chen2005}), if for each non-zero vector $X$ tangent to $M$, the angle $\theta(X)$ between $JX$ and $T_pM$ is a constant, i.e., it does not depend on the choice of $ p \in M$ and $ X \in T_pM$. In this case, the angle $\theta$ is called the slant angle of the submanifold. A slant submanifold $M$ is called proper slant submanifold if $\theta \neq 0, \frac{\pi}{2}$.\\
(2) A submanifold $M$ of an almost Hermitian manifold $\bar{M}$ is said to be invariant(holomorphic or complex) submanifold (see \cite{chen2005}), if $J(T_pM) \subseteq T_p(M)$ for every point $p \in M$.\\
(3) A submanifold $M$ of an almost Hermitian manifold $\bar{M}$ is said to be anti-invariant (totally real) submanifold (see \cite{KON}), if  $J(T_pM) \subseteq T^{\perp}_p(M)$ for every point $p \in M$.\\
(4) A submanifold $M$ of an almost Hermitian manifold $\bar{M}$ is said to be semi-invariant (see \cite{bejancu}), if there
exist two orthogonal complementary distributions $D$ and $D^{\perp}$ on M such that
\begin{align*}
	TM = D \oplus D^{\perp},
\end{align*}
 where $D$ is invariant and $D^{\perp}$ is anti-invariant.\\
 (5) A submanifold $M$ of an almost Hermitian manifold $\bar{M}$ is said to be semi-slant \cite{papaghuic}, if there exist two orthogonal complementary distributions $D$ and $D_{\theta}$ on $M$ such that
 \begin{align*}
 	TM = D \oplus D_{\theta},
 \end{align*}
 where $D$ is invariant and $D_{\theta}$ is slant with slant angle $\theta$. In this case, the angle $\theta$ is called semi-slant angle.\\
 (6) A submanifold $M$ of an almost Hermitian manifold $\bar{M}$ is said to be hemi-slant (see, \cite{Lone,stonvic}), if there
 exist two orthogonal complementary distributions $D_{\theta}$ and $D^{\perp}$ on $M$ such that
 \begin{align*}
 	TM = D_{\theta} \oplus D^{\perp},
 \end{align*}
 where $D_{\theta}$ is slant with slant angle $\theta$ and $D^{\perp}$ is anti-invariant. In this case, the angle $\theta$ is called hemi-slant angle.
 \begin{definition}
 	Let $M$ be a submanifold of an almost Hermitian manifold $\bar{M}$. Then, we say $M$ is a bi-slant submanifold of $\bar{M}$ if there exists a pair of orthogonal distributions $D_1$ and $D_2$ of $M$, at a point $ p \in M$ such that \\
 	(a) $ TM = D_1 \oplus D_2$;\\
 	(b) $ JD_1 \perp D_2$ and $JD_2 \perp D_1$;\\
 	(c) The distributions $D_1, D_2$ are pointwise slant with slant functions $\theta_1 , \theta_2 $, respectively.
 \end{definition}
 	\par The pair $\{ \theta_1, \theta_2\}$ of slant functions  is called the bi-slant function. A pointwise bi-slant submanifold $M$ is called proper if its bi-slant function satisfies $ \theta_1, \theta_2 \neq 0, \frac{\pi}{2}$ and both $\theta_1, \theta_2$ are not constant on $M$.

\section{Quasi bi-slant submanifolds of Kaehler manifolds}
In this section, we define and study quasi bi-slant submanifolds of Kaehler manifolds.
\begin{definition}
	A submanifold $M$ of an almost Hermitian manifold $\bar{M}$ is called a quasi bi-slant submanifold if
	there exist distributions $D$, $D_1$ and $D_2$ such that:\\
	(a) $TM$ admits the orthogonal direct decomposition as
	\begin{align*}
		TM = D \oplus D_1 \oplus D_2;
	\end{align*}
	(b) $J(D) = D$ i.e., $D$ is invariant;\\
	(c) $J(D_1) \perp  D_2$;\\
	(d) For any non-zero vector field $X \in (D_1)_x$; $x \in M$; the angle $\theta_1$ between $JX$ and $(D_1)_x$ is constant and independent of the choice of point $x$ and $X$ in $(D_1)_x$;\\
	(e)  For any non-zero vector field $Z \in (D_2)_y$; $y \in M$; the angle $\theta_2$ between $JZ$ and $(D_2)_y$ is constant and independent of the choice of point $y$ and $Z$ in $(D_2)_y$;\\
	The angles $\theta_1$ and $\theta_1$ are called slant angles of quasi bi-slant submanifold.
\end{definition}
\begin{remark}
	We can generalize the above definition by taking $TM = D \oplus D_{\theta_1} \oplus D_{\theta_2} ...\oplus D_{\theta_n}$. Hence we can define multi-slant submanifolds, quasi multi-slant submanifolds etc.
\end{remark}
Let $M$ be a quasi bi-slant submanifold of an almost Hermitian manifold $\bar{M}$. We denote the projections
of $X \in \Gamma(TM)$ on the distributions $D$, $D_1$ and $D_2$ by $P$, $Q$ and $R$, respectively. Then we can write, for any $X \in \Gamma(TM)$
\begin{align}\label{3.1}
	X = PX + QX + RX,
\end{align} 
we, can write
\begin{align}\label{3.2}
	JX = \phi X + \omega X,
\end{align}
where $\phi X$ and $\omega X$ are tangential and normal components of $JX$ on $M$, respectively.\\
Using (\ref{3.1}) and (\ref{3.2}), we obtain
\begin{eqnarray}\label{3.3}
	JX &=& JPX + JQX + JRX\nonumber \\ &=& \phi PX + \omega PX +\phi QX + \omega QX + \phi RX + \omega RX.
\end{eqnarray}
Since $JD = D$, we have $\omega PX = 0$. Therefore, we get
\begin{align}\label{3.4}
	JX = \phi PX +\phi QX + \omega QX + \phi RX + \omega RX.
\end{align}
This means, for any $X \in \Gamma(TM)$, we have
\begin{align*}
	\phi X = \phi PX + \phi QX + \phi RX \ \ \textnormal{and} \ \  \omega X = \omega QX + \omega RX.
\end{align*}
Thus, we have the following decomposition
\begin{align*}
	J(TM) \subset D \oplus \phi D_1 \oplus \omega D_1 \oplus \phi D_2 \oplus \omega D_2.
\end{align*}
Since $\omega D_1 \in (T^{\perp}M)$ and $\omega D_2 \in (T^{\perp}M)$, we have 
\begin{align}
		T^{\perp}M = \omega D_1 \oplus \omega D_2 \oplus \mu,
\end{align}
where $\mu$ is the orthogonal complement of $\omega D_1 \oplus \omega D_2$ in $(T^{\perp}M)$ and it is invariant with respect to $J$. \\
For any $Z \in \Gamma(T^{\perp}M)$, we put
\begin{align*}
	JZ = BZ + CZ,
\end{align*}
where $BZ \in \Gamma(TM)$ and $CZ \in \Gamma(T^{\perp}M)$.\\
\begin{lemma}\cite{Prasad1}
	Let $M$ be a quasi bi-slant submanifold of an almost Hermitian manifold $\bar{M}$, Then
\par (i) $\phi^2X = -(\cos^2\theta_1)X$,
\par (ii) $g(\phi X,\phi Y) = (\cos^2\theta_1)g(X,Y)$,
\par (iii) $ g(\omega X,\omega Y)= (\sin^2\theta_1)g(X,Y)$\\
for any $X, Y \in \Gamma(D1)$, where $\theta_1$ denotes the slant angle of $D_1$.  
\end{lemma}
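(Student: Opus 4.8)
The plan is to establish the three identities by working directly from the structure equations derived for quasi bi-slant submanifolds, specializing everything to vector fields in the slant distribution $D_1$. The key geometric input is that $D_1$ is slant with constant angle $\theta_1$, which means that for any $X \in \Gamma(D_1)$ the angle between $JX$ and $(D_1)_x$ equals $\theta_1$; combined with (\ref{2.2}) this controls the relative lengths of the tangential part $\phi X$ and normal part $\omega X$ of $JX$. First I would record the basic decomposition $JX = \phi X + \omega X$ from (\ref{3.2}) and note that, since $X \in D_1$, the tangential component $\phi X$ again lies in $D_1$ (as $J(D_1) \perp D_2$ and $D$ is invariant, the $D$- and $D_2$-parts of $\phi X$ vanish), so that $\phi$ restricts to an endomorphism of $D_1$.

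The cleanest route to (i) is the standard slant characterization: the slant condition with angle $\theta_1$ is equivalent to the operator identity $\phi^2 = -(\cos^2\theta_1)\,I$ on $D_1$. To see this I would compute $\cos\theta(X)$ for a unit vector $X \in D_1$ as the quotient $g(JX,\phi X)/(|JX|\,|\phi X|)$, use $g(JX,\phi X) = g(\phi X,\phi X) = |\phi X|^2$ (since $\omega X$ is normal and $g(\phi X, \omega X)=0$), and $|JX| = |X| = 1$ by (\ref{2.2}), which yields $\cos\theta_1 = |\phi X|/1$, hence $|\phi X|^2 = \cos^2\theta_1$. Replacing $X$ by an arbitrary vector and polarizing, together with the symmetry of $\phi$ from (\ref{2.9}), promotes this norm identity to the operator identity $\phi^2 X = -(\cos^2\theta_1)X$; the minus sign comes from applying $J$ to $JX = \phi X + \omega X$ and using (\ref{2.1}), namely $-X = J\phi X + J\omega X = \phi^2 X + \omega\phi X + B\omega X + C\omega X$, then projecting onto $\Gamma(TM)$ and matching tangential components.

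Given (i), parts (ii) and (iii) follow by routine manipulation. For (ii) I would write $g(\phi X,\phi Y)$ and use the symmetry $g(\phi X, Y) = g(X,\phi Y)$ from (\ref{2.9}) to move one $\phi$ across, obtaining $g(\phi X,\phi Y) = -g(\phi^2 X, Y)$, and then substitute (i) to get $g(\phi X,\phi Y) = (\cos^2\theta_1)\,g(X,Y)$. For (iii) I would start from $g(JX,JY) = g(X,Y)$ in (\ref{2.2}), expand both sides via $JX = \phi X + \omega X$ as $g(\phi X,\phi Y) + g(\omega X,\omega Y)$ (the cross terms vanish since tangential and normal parts are orthogonal), and subtract the already-proved relation (ii), leaving $g(\omega X,\omega Y) = (1-\cos^2\theta_1)\,g(X,Y) = (\sin^2\theta_1)\,g(X,Y)$.

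The only genuine subtlety — and the step I would treat most carefully — is justifying the passage from the pointwise angle condition to the global operator identity (i), since this is where the constancy of $\theta_1$ and the invariance of $D_1$ under $\phi$ are both used; once (i) is in hand, (ii) and (iii) are formal consequences of (\ref{2.2}) and (\ref{2.9}). Since the statement is quoted as \cite{Prasad1}, I would also remark that the identity is exactly the slant analogue of the $\phi^2 = -I$ relation for invariant distributions, so the proof is the natural specialization of the classical slant-submanifold computation to the slant factor $D_1$ of the quasi bi-slant decomposition.
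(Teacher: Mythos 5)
The paper contains no proof of this lemma to compare against: it is quoted verbatim from \cite{Prasad1}, so the statement enters the paper as a cited result. Judged on its own merits, your argument is the standard slant-distribution computation and is essentially correct: you first check that $\phi$ maps $D_1$ into $D_1$ (using $J(D)=D$ and $J(D_1)\perp D_2$, which together kill the $D$- and $D_2$-components of $\phi X$), then read off $|\phi X|=\cos\theta_1\,|X|$ from the definition of the slant angle (the projection of $JX$ onto $D_1$ is exactly $\phi X$), polarize to pass to a bilinear identity, and obtain (iii) by expanding $g(JX,JY)=g(X,Y)$ into tangential and normal parts and subtracting (ii). This is precisely the classical Chen-type argument specialized to the slant factor of the quasi bi-slant decomposition.

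One concrete point needs repair. You invoke (\ref{2.9}) as the symmetry $g(\phi X,Y)=g(X,\phi Y)$, but what your computation actually uses, and what is true, is the \emph{skew}-adjointness $g(\phi X,Y)=-g(X,\phi Y)$: from (\ref{2.1}) and (\ref{2.2}) one has $g(JX,Y)=-g(X,JY)$, and the normal parts $\omega X,\omega Y$ pair to zero against tangent vectors, so the tangential part $\phi$ of $J$ inherits skew-adjointness. If you applied the symmetric relation literally as you cite it, moving $\phi$ across in (ii) would give $g(\phi X,\phi Y)=+g(\phi^2X,Y)=-(\cos^2\theta_1)g(X,Y)$, the wrong sign; your intermediate step $g(\phi X,\phi Y)=-g(\phi^2X,Y)$ is only justified by the skew relation. (Equation (\ref{2.9}) as printed in the paper appears itself to carry a sign typo; in the Hermitian setting the tangential part of $J$ is never self-adjoint unless it vanishes.) The same substitution is needed in your polarization step for (i), where you require $\phi^2$ to be self-adjoint on $D_1$ — which is exactly what two applications of skew-adjointness provide. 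With that correction your proof is complete.
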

\begin{lemma}\cite{Prasad1}
	Let $M$ be a quasi bi-slant submanifold of an almost Hermitian manifold $\bar{M}$, Then
	\par (i) $\phi^2Z = -(\cos^2\theta_2)Z$,
	\par (ii) $g(\phi Z,\phi W) = (\cos^2\theta_2)g(Z,W)$,
	\par (iii) $ g(\omega Z,\omega W)= (\sin^2\theta_2)g(Z,W)$\\
	for any $Z, W \in \Gamma(D2)$, where $\theta_2$ denotes the slant angle of $D_2$.  
\end{lemma}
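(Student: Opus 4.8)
The plan is to mirror the argument for the preceding lemma on $D_1$, since the statement for $D_2$ is structurally identical and only the distribution and its slant angle change. Throughout I will use the Hermitian relation (\ref{2.2}): replacing $Y$ by $JY$ and invoking (\ref{2.1}) gives $g(JX,Y)=-g(X,JY)$ for all $X,Y\in\Gamma(T\bar M)$, and restricting to tangent fields while discarding the normal parts shows that $\phi$ is skew-symmetric on $\Gamma(TM)$, i.e. $g(\phi X,Y)=-g(X,\phi Y)$. This skew-symmetry, together with the isometry property, is all the structure the proof needs.

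First I would check that $\phi$ stabilizes $D_2$, so that for $Z\in\Gamma(D_2)$ the field $\phi Z$ is \emph{exactly} the orthogonal projection of $JZ$ onto $(D_2)_y$. Indeed, for $X\in\Gamma(D)$ one has $g(\phi Z,X)=g(JZ,X)=-g(Z,JX)=0$ since $JD=D$ is orthogonal to $D_2$; and for $X\in\Gamma(D_1)$ one has $g(\phi Z,X)=-g(Z,JX)=0$ by condition (c), $J(D_1)\perp D_2$. Hence $\phi Z\in\Gamma(D_2)$. Since $JZ=\phi Z+\omega Z$ with $\omega Z$ normal to $M$, the vector $\phi Z$ is the $(D_2)_y$-component of $JZ$, so the definition of the slant angle yields $\cos\theta_2=|\phi Z|/|JZ|$; combined with $|JZ|^2=g(JZ,JZ)=g(Z,Z)=|Z|^2$ this gives the key scalar identity $|\phi Z|^2=(\cos^2\theta_2)\,|Z|^2$.

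To obtain (i) I would upgrade this scalar relation to an operator identity. Because $\phi$ is skew-symmetric and preserves $D_2$, the endomorphism $\phi^2$ is symmetric and negative semi-definite on $D_2$, hence diagonalizable; for a unit eigenvector $Z$ with $\phi^2Z=-\lambda Z$ one has $|\phi Z|^2=-g(\phi^2 Z,Z)=\lambda$, so the scalar identity forces $\lambda=\cos^2\theta_2$. As $\theta_2$ is constant, every eigenvalue equals $-\cos^2\theta_2$, giving $\phi^2 Z=-(\cos^2\theta_2)Z$ on all of $D_2$, which is (i). (Equivalently, one may polarize $|\phi Z|^2=(\cos^2\theta_2)|Z|^2$ — legitimate precisely because $\cos^2\theta_2$ is constant — to get (ii) directly, and then read off (i) from $g(\phi^2 Z,W)=-g(\phi Z,\phi W)$ using $\phi^2 Z\in\Gamma(D_2)$.) Parts (ii) and (iii) are then formal: for (ii), skew-symmetry and (i) give $g(\phi Z,\phi W)=-g(\phi^2 Z,W)=(\cos^2\theta_2)g(Z,W)$; for (iii), applying (\ref{2.2}) to $JZ$ and $JW$ and splitting into tangential and normal parts gives $g(Z,W)=g(\phi Z,\phi W)+g(\omega Z,\omega W)$, so substituting (ii) yields $g(\omega Z,\omega W)=(1-\cos^2\theta_2)g(Z,W)=(\sin^2\theta_2)g(Z,W)$.

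The only real content sits in (i): passing from the pointwise angle condition to the pointwise tensorial identity $\phi^2=-(\cos^2\theta_2)I$ on $D_2$, where the constancy of $\theta_2$ is exactly what makes the spectral (or polarization) step go through. The minor technical point to keep straight is the verification that $\phi$ stabilizes $D_2$, which is what licenses identifying $\phi Z$ with the $(D_2)_y$-projection of $JZ$; this step uses conditions (b) and (c). I would also record explicitly that the computation rests on the skew-symmetry $g(\phi X,Y)=-g(X,\phi Y)$ obtained from (\ref{2.2}), since it is this relation that converts the statements into one another.
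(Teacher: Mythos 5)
The paper does not actually prove this lemma: it is quoted verbatim from Prasad--Akyol--Verma--Kumar \cite{Prasad1} (their structure results for the slant distributions $D_1$, $D_2$), so there is no in-paper argument to compare against. Judged on its own, your proof is correct and complete, and it is essentially the standard argument for slant distributions going back to Chen: (a) $\phi$ is skew-symmetric and stabilizes $D_2$ (your use of invariance of $D$ and of condition (c), $J(D_1)\perp D_2$, is exactly the right way to see $\phi Z\in\Gamma(D_2)$); (b) the slant condition gives the quadratic identity $|\phi Z|^2=(\cos^2\theta_2)|Z|^2$; (c) polarization (or your spectral argument for the symmetric operator $\phi^2|_{D_2}$) upgrades this to (ii) and hence (i); (d) splitting $g(JZ,JW)=g(Z,W)$ into tangential and normal parts gives (iii). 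Your ordering (i)$\Rightarrow$(ii)$\Rightarrow$(iii) versus the equivalent (ii)$\Rightarrow$(i)$\Rightarrow$(iii) is immaterial; both hinge on the same two facts, $\phi(D_2)\subset D_2$ and the pointwise constancy of the angle over all vectors in $(D_2)_y$.

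One observation worth recording: the relation you derive and rely on, $g(\phi X,Y)=-g(X,\phi Y)$, is the correct consequence of (\ref{2.2}), whereas the paper's equation (\ref{2.9}) asserts the symmetry $g(TX,Y)=g(X,TY)$, which as written has the wrong sign for an almost Hermitian structure (it is the identity appropriate to an almost product structure, and is presumably a typo). Your proof silently corrects this, which is the right thing to do; had you taken (\ref{2.9}) at face value, the computation $g(\phi Z,\phi W)=-g(\phi^2Z,W)$ in step (ii) would fail.
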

\section{Some Results on quasi bi-slant submanifolds}
For a proper quasi bi-slant submanifold $M$ of a kaehler manifold $\bar{M}$, the normal bundle of $M$ is decomposed as 
\begin{align}\label{4.1}
	T^{\perp}M = \omega D_1 \oplus \omega D_2 \oplus \mu,
\end{align}
where $\mu$ is the orthogonal complement of $\omega D_1 \oplus \omega D_2$ in $(T^{\perp}M)$ and it is invariant with respect to $J$.
\par The following results for proper quasi bi-slant submanifolds is given as:
\begin{proposition}
	Let $M$ be a proper quasi bi-slant submanifold of a Kaehler manifold $\bar{M}$. Then, we have
	\begin{footnotesize}
	\begin{eqnarray}\label{4.2}
	g(\nabla_XY,\phi Z) &=& (\cos^2\theta_1)g(\nabla_XQY,\phi Z) + g(A_{\omega \phi QY}\phi Z,X)- (\sin^2\theta_1)g(A_{\omega Z}X,QY)\nonumber \\&& + (\cos^2\theta_2)g(\nabla_XRY,\phi Z)+ g(A_{\omega \phi RY}\phi Z,X)-(\sin^2\theta_2)g(A_{\omega Z}X,RY) \nonumber \\ &&  + g(\nabla^{\perp}_X\omega Z,\omega \phi QY) + g(\nabla^{\perp}_X\omega Z,\omega \phi RY) +  g(A_{\omega QY}Z,X)\nonumber \\&& + g(A_{\omega RY}Z,X),
	\end{eqnarray}
\end{footnotesize}
for any $X \in \Gamma(D_1)$ and $Y,Z \in \Gamma(D_1 \oplus D_2)$, where $\theta_1$ and $\theta_2$ are slant angles of slant distribution $D_1$ and $D_2$, respectively. 
\end{proposition}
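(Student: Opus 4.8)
The plan is to push everything through the ambient Levi--Civita connection $\bar\nabla$ and strip off the normal contributions with the Gauss and Weingarten formulas, using the Kaehler condition $(\bar\nabla_XJ)Y=0$ to trade $J\bar\nabla$ for $\bar\nabla J$ whenever that simplifies the expression. First I would note that $\phi Z\in\Gamma(TM)$, so by the Gauss formula the second fundamental form is orthogonal to $\phi Z$ and hence $g(\nabla_XY,\phi Z)=g(\bar\nabla_XY,\phi Z)$. Since $Y\in\Gamma(D_1\oplus D_2)$ we have $PY=0$, so $Y=QY+RY$ and $JY=\phi QY+\omega QY+\phi RY+\omega RY$ by (\ref{3.2}). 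Writing $\phi Z=JZ-\omega Z$ splits the quantity as $g(\bar\nabla_XY,JZ)-g(\bar\nabla_XY,\omega Z)$; the second piece is purely normal, so (\ref{2.5})--(\ref{2.6}) turn it into $-g(A_{\omega Z}X,QY)-g(A_{\omega Z}X,RY)$, i.e. coefficient-one versions of the $A_{\omega Z}$ terms.

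For $g(\bar\nabla_XY,JZ)$ I would use skew-symmetry of $J$ (a consequence of (\ref{2.2})) together with (\ref{2.3}) to obtain $-g(\bar\nabla_XJY,Z)=-g(\bar\nabla_X\phi Y,Z)-g(\bar\nabla_X\omega Y,Z)$. The $\omega Y=\omega QY+\omega RY$ piece is again normal, and Weingarten converts it into precisely $g(A_{\omega QY}Z,X)+g(A_{\omega RY}Z,X)$. The decisive step is the tangential piece $-g(\bar\nabla_X\phi Y,Z)$, where I apply the Kaehler identity a \emph{second} time to each slant component. For the $D_1$-part, $J$-invariance of $g$ and (\ref{2.3}) give $g(\bar\nabla_X\phi QY,Z)=g(\bar\nabla_XJ\phi QY,JZ)$, and the slant identity $\phi^2QY=-\cos^2\theta_1\,QY$ (Lemma 3.3(i)) lets me replace $J\phi QY=\phi^2QY+\omega\phi QY$ by $-\cos^2\theta_1\,QY+\omega\phi QY$. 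Re-expanding $JZ=\phi Z+\omega Z$ and applying Gauss/Weingarten once more yields $\cos^2\theta_1\,g(\nabla_XQY,\phi Z)$, the shape term $g(A_{\omega\phi QY}\phi Z,X)$, the normal-connection term pairing $\nabla^\perp_X\omega Z$ with $\omega\phi QY$, and a stray $+\cos^2\theta_1\,g(A_{\omega Z}X,QY)$.

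Finally I would collect. The mechanism that manufactures the $\sin^2\theta_i$ weights is simply that the coefficient-one term $-g(A_{\omega Z}X,QY)$ from the very first splitting combines with the $+\cos^2\theta_1\,g(A_{\omega Z}X,QY)$ produced by the double-Kaehler step, giving $-\sin^2\theta_1\,g(A_{\omega Z}X,QY)$ through $1-\cos^2\theta_1=\sin^2\theta_1$; the $D_2$-component is treated verbatim with Lemma 3.4, producing the mirror terms in $\theta_2$ and $R$, and adding the two blocks gives (\ref{4.2}). The main obstacle is organizational rather than conceptual: one must run the two nested applications of (\ref{2.3}) while keeping $Z$ packaged as $\phi Z$ and $\omega Z$ throughout (never forming $J\phi Z=\phi^2Z+\omega\phi Z$, which would illegitimately re-split $Z$ across $D_1$ and $D_2$ and destroy the stated form), and separate tangential from normal parts correctly at each Gauss/Weingarten step so that the $\cos^2\theta_i$ and $\sin^2\theta_i$ factors attach to the intended terms. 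The orthogonality $\omega D_1\perp\omega D_2$ together with the symmetry of the shape operator from (\ref{2.6}) are exactly what keep the mixed pairings controlled, and since the whole argument is an exact pointwise tensorial identity there are no analytic subtleties to address.
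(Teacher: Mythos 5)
Your proposal is correct and follows essentially the same route as the paper's own proof: the splitting $\phi Z = JZ-\omega Z$ with $Y=QY+RY$, a first pass of Gauss/Weingarten to produce the coefficient-one $A_{\omega Z}$ and $A_{\omega QY},A_{\omega RY}$ terms, a second application of the Kaehler identity to $\phi QY$ and $\phi RY$ combined with Lemmas 3.3(i) and 3.4(i), and the collection $1-\cos^2\theta_i=\sin^2\theta_i$ that yields the $\sin^2\theta_i$ weights. Even your handling of the term pairing $\nabla^{\perp}_X\omega Z$ with $\omega\phi QY$ (which tacitly invokes metric compatibility of the normal connection) mirrors exactly the corresponding step in the paper.
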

\begin{proof}
	For any $X \in \Gamma(D_1)$ and $Y,Z \in \Gamma(D_1 \oplus D_2)$, we have
	\begin{eqnarray*}
		g(\nabla_XY,\phi Z) = g(\bar{\nabla}_X(QY + RY), JZ) - g(\bar{\nabla}_X(QY + RY), \omega Z).
	\end{eqnarray*}
Using (\ref{2.1}), (\ref{2.2}), (\ref{2.4}) and (\ref{3.2}), we have
\begin{eqnarray*}
 g(\nabla_XY,\phi Z) &=&	- g(J\bar{\nabla}_XQY,Z) - g(J\bar{\nabla}_XRY,Z) - g(\sigma(X,QY), \omega Z)\\ && - g(\sigma(X,RY),\omega Z).
\end{eqnarray*} 
Then by applying $\bar{\nabla} J=0$, and  using (\ref{2.6}) and (\ref{3.2}), we obtain
\begin{eqnarray*}
 g(\nabla_XY,\phi Z) &=& -g(\bar{\nabla}_XJQY,Z) - g(\bar{\nabla}_XJRY,Z) - g(\sigma(X,QY), \omega Z)\\&& - g(\sigma(X,RY),\omega Z)\\ &=& -g(\bar{\nabla}_X\phi QY,Z) -g(\bar{\nabla}_X\omega QY,Z) -g(\bar{\nabla}_X\phi RY,Z) \\&& -g(\bar{\nabla}_X\omega RY,Z) - g(A_{\omega Z}X,QY) - g(A_{\omega Z}X,RY).	
\end{eqnarray*}
On simplifying above equation and using (\ref{2.5}), we arrive at
\begin{eqnarray*}
g(\nabla_XY,\phi Z) &=& -g(\bar{\nabla}_X\phi QY,Z) + g(A_{\omega QY}X,Z) - g(\bar{\nabla}_X\phi RY,Z)\\ && + g(A_{\omega RY}X,Z)- g(A_{\omega Z}X,QY) - g(A_{\omega Z}X,RY). 
\end{eqnarray*}
Using (\ref{2.2}), the above equation can be re-written as
\begin{eqnarray*}
	g(\nabla_XY,\phi Z) &=& -g(J\bar{\nabla}_X\phi QY,JZ) + g(A_{\omega QY}X,Z) -g(J\bar{\nabla}_X\phi RY,JZ) \\ && +  g(A_{\omega RY}X,Z) - g(A_{\omega Z}X,QY) - g(A_{\omega Z}X,RY). 
\end{eqnarray*}
Since, the shape operator $A$ is self-adjoint, it follows from (\ref{2.3}), (\ref{2.6}) and (\ref{3.2}), we get
\begin{footnotesize}
\begin{eqnarray*}
	g(\nabla_XY,\phi Z) &=& -g(\bar{\nabla}_XJ\phi QY,\phi Z) - g(\bar{\nabla}_XJ\phi QY,\omega Z) -g(\bar{\nabla}_XJ\phi RY,\phi Z)\\&& - g(\bar{\nabla}_XJ\phi RY,\omega Z)  + g(A_{\omega QY}Z,X) + g(A_{\omega RY}Z,X)\\ &&- g(A_{\omega Z}X,QY) - g(A_{\omega Z}X,RY) \\ &=& -g(\bar{\nabla}_X\phi^2QY,\phi Z) - g(\bar{\nabla}_X\omega \phi QY,\phi Z) - g(\bar{\nabla}_X\phi^2RY,\phi Z)\\ && -g(\bar{\nabla}_X\omega \phi RY,\phi Z)  -g(\bar{\nabla}_X\phi^2QY,\omega Z) - g(\bar{\nabla}_X\omega \phi QY,\omega Z)\\&& -g(\bar{\nabla}_X\phi^2RY,\omega Z)- g(\bar{\nabla}_X\omega \phi RY,\omega Z) + g(A_{\omega QY}Z,X)\\&&  + g(A_{\omega RY}Z,X) - g(A_{\omega Z}X,QY) - g(A_{\omega Z}X,RY). 
\end{eqnarray*}
\end{footnotesize}
Now, using (\ref{2.5}), Lemma 3.3 and Lemma 3.4, we obtain
\begin{footnotesize}
\begin{eqnarray*}
g(\nabla_XY,\phi Z) &=& (\cos^2\theta_1)g(\nabla_XQY,\phi Z) + g(A_{\omega \phi QY}X,\phi Z) + (\cos^2\theta_2)g(\nabla_XRY,\phi Z) \\ &&+ g(A_{\omega \phi RY}X,\phi Z) + (\cos^2\theta_1)g(\bar{\nabla}_XQY,\omega Z) + g(\omega \phi QY,\bar{\nabla}_X\omega Z) \\&&(\cos^2\theta_2)g(\bar{\nabla}_XRY,\omega Z)+ g(\omega \phi RY,\bar{\nabla}_X\omega Z) +  g(A_{\omega QY}Z,X)\\&& + g(A_{\omega RY}Z,X) - g(A_{\omega Z}X,QY)- g(A_{\omega Z}X,RY).
\end{eqnarray*}
\end{footnotesize}
Again using (\ref{2.3}), (\ref{2.5}) and (\ref{2.6}), we arrive at 
\begin{footnotesize}
\begin{eqnarray*}
	g(\nabla_XY,\phi Z) &=& (\cos^2\theta_1)g(\nabla_XQY,\phi Z) + g(A_{\omega \phi QY}\phi Z,X) + (\cos^2\theta_2)g(\nabla_XRY,\phi Z) \\ &&+ g(A_{\omega \phi RY}\phi Z,X) + (\cos^2\theta_1)g(A_{\omega Z}X,QY) + g(\nabla^{\perp}_X\omega Z,\omega \phi QY) \\&&(\cos^2\theta_2)g(A_{\omega Z}X,RY)+ g(\nabla^{\perp}_X\omega Z,\omega \phi RY) +  g(A_{\omega QY}Z,X)\\&& + g(A_{\omega RY}Z,X) - g(A_{\omega Z}X,QY)- g(A_{\omega Z}X,RY).
\end{eqnarray*}
\end{footnotesize}
Now, from above relation, the desired result follows. Hence, the proof is complete.
\end{proof}
\begin{proposition}
	Let $M$ be a proper quasi bi-slant submanifold of a Kaehler manifold $\bar{M}$. Then, we have
\begin{small}
	\begin{eqnarray}\label{4.3}
	g([Y,Z],\phi X) &=&  g(A_{\omega \phi Z}\phi X,Y) - g(A_{\omega \phi Y}\phi X,Z) + g(\nabla^{\perp}_Y\omega X,\omega \phi Z) \\&&- g(\nabla^{\perp}_Z\omega X,\omega \phi Y) +  g(A_{\omega Z}X,Y)\nonumber - g(A_{\omega Y}X,Z)
\end{eqnarray}
\end{small}
for any $X \in \Gamma(D_1)$ and $Y,Z \in \Gamma(D_1 \oplus D_2)$, where $\theta_1$ and $\theta_2$ are slant angles of slant distribution $D_1$ and $D_2$, respectively. 
\end{proposition}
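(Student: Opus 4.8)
The plan is to expand the bracket as $g([Y,Z],\phi X)=g(\nabla_Y Z,\phi X)-g(\nabla_Z Y,\phi X)$ and to produce a formula for the single term $g(\nabla_Y Z,\phi X)$ by the same mechanism used in the preceding proposition (equation (4.2)), after which (4.3) will follow by antisymmetrising in $Y$ and $Z$. A convenient preliminary simplification is available: since $JX=\phi X+\omega X$ by (3.2) with $\omega X\in\Gamma(T^\perp M)$ and $[Y,Z]\in\Gamma(TM)$, we have $g([Y,Z],\omega X)=0$, so that $g([Y,Z],\phi X)=g([Y,Z],JX)$. Writing $[Y,Z]=\bar\nabla_Y Z-\bar\nabla_Z Y$ and applying (2.2) together with the Kaehler condition (2.3) in the form $J\bar\nabla_Y Z=\bar\nabla_Y JZ$, this reduces to $g([Y,Z],\phi X)=-g(\bar\nabla_Y JZ,X)+g(\bar\nabla_Z JY,X)$.

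First I would dispose of the anti-invariant parts. Decomposing $JZ=\phi Z+\omega Z$ by (3.2) and applying the Weingarten formula (2.5) to $\bar\nabla_Y\omega Z$, the $\omega Z$ contribution is $-g(\bar\nabla_Y\omega Z,X)=g(A_{\omega Z}Y,X)=g(A_{\omega Z}X,Y)$, where (2.6) and the self-adjointness of the shape operator are used; the analogous computation for $\omega Y$ gives $-g(A_{\omega Y}X,Z)$, and together these yield the last antisymmetric pair of (4.3). It then remains to treat $-g(\bar\nabla_Y\phi Z,X)+g(\bar\nabla_Z\phi Y,X)$. For $-g(\bar\nabla_Y\phi Z,X)$ I would apply (2.2) and (2.3) a second time to rewrite it as $-g(\bar\nabla_Y J\phi Z,JX)$, then substitute $J\phi Z=\phi^2 Z+\omega\phi Z$ and $JX=\phi X+\omega X$ and expand via Gauss–Weingarten. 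The $\omega\phi Z$ piece produces, through (2.5) and (2.6), the terms $g(A_{\omega\phi Z}\phi X,Y)$ and $g(\nabla^\perp_Y\omega X,\omega\phi Z)$, which—with their $Y\leftrightarrow Z$ counterparts obtained from $g(\bar\nabla_Z\phi Y,X)$—furnish the first two antisymmetric pairs of (4.3).

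The main obstacle is the $\phi^2 Z$ piece. Here I would invoke Lemmas 3.3 and 3.4, which give $\phi^2 Z=-(\cos^2\theta_1)QZ-(\cos^2\theta_2)RZ$; these generate $\cos^2\theta_i$-weighted terms of the form $g(\nabla_Y QZ,\phi X)$ and symmetric second fundamental form contributions such as $g(A_{\omega X}Y,Z)$. The delicate point, and the step I expect to be genuinely hard, is to verify that after forming the antisymmetric combination $g(\nabla_Y Z,\phi X)-g(\nabla_Z Y,\phi X)$ none of these residual pieces survives: the purely symmetric contributions drop out immediately because $\sigma$ is symmetric and $A$ is self-adjoint, but the $\cos^2\theta_i$-weighted terms and the normal-connection terms must be tracked with care and shown to cancel or recombine so that only the six listed terms remain. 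This bookkeeping should be carried out in exact parallel with the chain of substitutions (2.2), (2.3), (2.5), (2.6) already exhibited in the proof of the preceding proposition, and it is precisely the consistent handling of the $\phi^2$-terms and the $\nabla^\perp$-terms where the computation must be done most carefully.
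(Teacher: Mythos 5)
Your strategy coincides with the paper's own: the paper proves (4.3) by writing the analogue of (4.2) for $g(\nabla_Z Y,\phi X)$ (its equation (4.4)), interchanging $Y$ and $Z$ to get (4.5), and subtracting, so that terms symmetric in $(Y,Z)$ die by self-adjointness of $A$ and the antisymmetric pairs survive. The pieces you actually compute agree with that computation: the $\omega$-components give $g(A_{\omega Z}X,Y)-g(A_{\omega Y}X,Z)$, and the $\omega\phi$-components give $g(A_{\omega\phi Z}\phi X,Y)-g(A_{\omega\phi Y}\phi X,Z)$ together with the $\nabla^{\perp}$ pair. (Even here there is a silent omission, shared with the paper: converting $-g(\nabla^{\perp}_Y\omega\phi Z,\omega X)$ into $+g(\nabla^{\perp}_Y\omega X,\omega\phi Z)$ requires differentiating $g(\omega\phi Z,\omega X)$, which by Lemma 3.3 equals $\sin^{2}\theta_{1}\,g(\phi QZ,X)$ and is not identically zero, so derivative terms of the form $Y g(\omega\phi Z,\omega X)-Z g(\omega\phi Y,\omega X)$ are being discarded without justification.)

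The genuine gap is the one you flag and then defer: the $\phi^{2}$-terms. You state that the $\cos^{2}\theta_{i}$-weighted remnants "must be tracked with care and shown to cancel", but you do not show it, and they do not cancel by symmetry alone. Substituting $\phi^{2}Z=-(\cos^{2}\theta_{1})QZ-(\cos^{2}\theta_{2})RZ$ and antisymmetrizing in $Y,Z$, the tangential contribution is
\begin{align*}
(\cos^{2}\theta_{1})\,g(\nabla_{Y}QZ-\nabla_{Z}QY,\phi X)
+(\cos^{2}\theta_{2})\,g(\nabla_{Y}RZ-\nabla_{Z}RY,\phi X),
\end{align*}
which, after writing $RZ=Z-QZ$ and $RY=Y-QY$, equals
\begin{align*}
(\cos^{2}\theta_{2})\,g([Y,Z],\phi X)
+(\cos^{2}\theta_{1}-\cos^{2}\theta_{2})\,g(\nabla_{Y}QZ-\nabla_{Z}QY,\phi X).
\end{align*}
The first summand is proportional to the very quantity being computed, and the second is not a Lie bracket (the projection $Q$ sits inside the derivative) and has no $(Y,Z)$-symmetry forcing it to vanish. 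Likewise the normal contribution of the $\phi^{2}$-terms leaves the cross terms
\begin{align*}
(\cos^{2}\theta_{1}-\cos^{2}\theta_{2})\bigl(g(A_{\omega X}QZ,RY)-g(A_{\omega X}QY,RZ)\bigr),
\end{align*}
which disappear only when $\theta_{1}=\theta_{2}$ or when one argument has no component in the other distribution. So "antisymmetrize and the rest cancels by symmetry of $\sigma$ and $A$" is not a proof: the symmetric terms do cancel, but these remnants are antisymmetric and survive. The paper's own proof hides the same issue inside the unexplained assertion that subtracting (4.4) from (4.5) "yields (4.3)", so your plan reproduces the paper's route up to, but not through, its only nontrivial step; as a self-contained argument it is incomplete precisely there.
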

\begin{proof}
 In a similar fashion, as Proposition 4.1, we can derive
 	\begin{footnotesize}
 	\begin{eqnarray}\label{4.4}
 		g(\nabla_ZY,\phi X) &=& (\cos^2\theta_1)g(\nabla_{QZ}QY,\phi X) + g(A_{\omega \phi QY}\phi X,QZ)- (\sin^2\theta_1)g(A_{\omega X}QZ,QY)\nonumber \\&& + (\cos^2\theta_2)g(\nabla_{RZ}RY,\phi X)+ g(A_{\omega \phi RY}\phi X,RZ)-(\sin^2\theta_2)g(A_{\omega X}RZ,RY) \nonumber \\ &&  + g(\nabla^{\perp}_Z\omega X,\omega \phi QY) + g(\nabla^{\perp}_Z\omega X,\omega \phi RY) +  g(A_{\omega QY}X,Z)\nonumber \\&& + g(A_{\omega RY}X,Z),
 	\end{eqnarray}
 \end{footnotesize}
for $X \in \Gamma(D)$ and $Y,Z \in \Gamma(D_1 \oplus D_2)$.
\par Interchanging $Y$ and $Z$ in (\ref{4.4}) yields
	\begin{footnotesize}
	\begin{eqnarray}\label{4.5}
		g(\nabla_YZ,\phi X) &=& (\cos^2\theta_1)g(\nabla_{QY}QZ,\phi X) + g(A_{\omega \phi QZ}\phi X,QY)- (\sin^2\theta_1)g(A_{\omega X}QY,QZ)\nonumber \\&& + (\cos^2\theta_2)g(\nabla_{RY}RZ,\phi X)+ g(A_{\omega \phi RZ}\phi X,RY)-(\sin^2\theta_2)g(A_{\omega X}RY,RZ) \nonumber \\ &&  + g(\nabla^{\perp}_Y\omega X,\omega \phi QZ) + g(\nabla^{\perp}_Y\omega X,\omega \phi RZ) +  g(A_{\omega QZ}X,Y)\nonumber \\&& + g(A_{\omega RZ}X,Y).
	\end{eqnarray}
\end{footnotesize}
Then after using symmetry of shape operator, (\ref{3.1}) and subtracting (\ref{4.4}) from (\ref{4.5}) , we obtain (\ref{4.3}). Hence the proof is complete.
	
\end{proof}
\section{Warped product quasi bi-slant submanifolds of Kaehler manifold}
Let $(M_1, g_1)$ and $(M_2, g_2)$ be two Riemannian manifolds and $f > 0$, be a positive differentiable function on $M_1$. Consider the product	manifold $M_1 \times M_2$ with its canonical projections $\pi : M_1 \times M_2 \rightarrow M_1$ and $\rho : M_1 \times M_2 \rightarrow M_2$. The warped product $M = M_1 \times_f M_2$ is the product manifold $M_1 \times M_2$ equipped with the Riemannian metric $g$ such that\\
\begin{eqnarray*}
	g(X,Y) = g_1(\pi_\ast(X),\pi_\ast(Y)) + (f \circ \pi)^2g_2(\rho_\ast(X),\rho_\ast(Y))
\end{eqnarray*}
for any tangent vector $X, Y \in TM$, where $\ast$ is the symbol for the tangent maps. It was proved in \cite{Neill} that for any $ X \in TM_1$ and $Z \in TM_2$, the following holds
\begin{eqnarray}\label{5.1}
	\nabla_XZ = \nabla_ZX = (Xln f)Z
\end{eqnarray}
where $\nabla$ denotes the Levi-Civita connection of $g$ on $M$. A warped product manifold $ M = M_1 \times_f M_2$ is said to be trivial if the warping function $f$ is constant. If $M = M_1 \times_f M_2$ is a warped product manifold then $M_1$ is totally geodesic and $M_2$ is a totally umbilical (see \cite{Neill,6}).\\
From now onwards, we assume the ambient manifold $\bar{M}$ is Kaehler manifold and $M$ is quasi bi-slant submanifold in $\bar{M}$.\\
Now we give the following useful lemma for later use.
\begin{lemma}
	Let $M = M_1 \times_fM_2$, where $M_2 = M_{\theta_1} \times M_{\theta_2}$ be a warped product quasi bi-slant submanifold of a Kaehler manifold $\bar{M}$. Then, we have
	\begin{align}
		g(\sigma(X,Y),\omega Z) = g(\sigma(X,Z),\omega QY) + g(\sigma(X,Z),\omega RY),
	\end{align}
for any $X, Z \in \Gamma(M_1)$ and $Y \in \Gamma(M_2)$.
\end{lemma}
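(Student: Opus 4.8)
The plan is to compute $g(\sigma(X,Y),\omega Z)$ by moving the normal-component pairing back onto the ambient covariant derivative and then exploiting the warped-product structure through the key identity \eqref{5.1}. Concretely, I would start from $g(\sigma(X,Y),\omega Z)=g(\bar\nabla_X Y,\omega Z)$ and rewrite $\omega Z=JZ-\phi Z$, so that $g(\sigma(X,Y),\omega Z)=g(\bar\nabla_X Y,JZ)-g(\bar\nabla_X Y,\phi Z)$. Since $Y\in\Gamma(M_2)$ lies tangent to the fiber and $\phi Z$ is also tangent to $M$, the second term is an intrinsic quantity on $M$; the first term I would transform using \eqref{2.2} and the Kaehler condition \eqref{2.3} to pull $J$ across, writing $g(\bar\nabla_X Y,JZ)=-g(J\bar\nabla_X Y,Z)=-g(\bar\nabla_X JY,Z)$, and then decompose $JY=\phi QY+\omega QY+\phi RY+\omega RY$ via \eqref{3.4}.

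The main structural step is to recognize that $X,Z\in\Gamma(M_1)$ are tangent to the base while $Y\in\Gamma(M_2)$ is tangent to the fiber, and to apply \eqref{5.1} systematically. Because $M_1$ is totally geodesic in $M$, derivatives $\nabla_X(\text{base field})$ stay in the base, whereas the mixed derivatives $\nabla_X Z'$ or $\nabla_{Z'} X$ with one fiber field produce only the conformal factor $(X\ln f)Z'$. First I would handle each of the four pieces of $JY$: the terms $\phi QY,\phi RY$ are tangent to the fiber $M_2$ (they remain in $D_1\oplus D_2$), so their contributions feed back into expressions where \eqref{5.1} forces the base-field $X$ acting on them to reduce to shape-operator terms against $X,Z$; the terms $\omega QY,\omega RY$ are normal, and I would convert $g(\bar\nabla_X\,\omega QY,Z)$ and $g(\bar\nabla_X\,\omega RY,Z)$ into $-g(A_{\omega QY}X,Z)$ and $-g(A_{\omega RY}X,Z)$ through the Weingarten formula \eqref{2.5}, then re-express these using \eqref{2.6} as $-g(\sigma(X,Z),\omega QY)$ and $-g(\sigma(X,Z),\omega RY)$, which are exactly the right-hand side terms.

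I expect the main obstacle to be the careful bookkeeping that shows the invariant-distribution piece and the $\phi$-valued fiber terms cancel or collapse, leaving only the two anti-invariant $\omega QY,\omega RY$ contributions. This is where the warped-product hypothesis does the essential work: whenever a covariant derivative of a fiber-tangent field along the base field $X$ appears, \eqref{5.1} replaces it by a scalar multiple of that same fiber field, and pairing such a term against another fiber field $Z$ (which is also multiplied by the warping gradient) yields symmetric quantities that match across the two sides and cancel upon using the totally-geodesic nature of $M_1$ and the self-adjointness of the shape operator. Keeping track of the $\cos^2\theta_i$ and $\sin^2\theta_i$ factors produced by Lemma~3.3 and Lemma~3.4 when $\phi^2$ acts, and verifying that they ultimately drop out or combine into the clean form stated, is the delicate part.

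Finally, I would collect the surviving terms, invoke the symmetry of $\sigma$ (so that $g(\sigma(X,Z),\cdot)=g(\sigma(Z,X),\cdot)$) and the relation \eqref{2.6} one last time to present the result as
\begin{align*}
	g(\sigma(X,Y),\omega Z) = g(\sigma(X,Z),\omega QY) + g(\sigma(X,Z),\omega RY),
\end{align*}
thereby completing the proof.
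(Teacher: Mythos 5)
Your proposal is correct and follows essentially the same route as the paper's own proof: Gauss formula with $\omega Z=JZ-\phi Z$, the Kaehler condition \eqref{2.3} to transfer $J$ onto $Y$, the decomposition $JY=\phi QY+\omega QY+\phi RY+\omega RY$, the Weingarten formula \eqref{2.5} together with \eqref{2.6} to produce the two terms $g(\sigma(X,Z),\omega QY)$ and $g(\sigma(X,Z),\omega RY)$, and \eqref{5.1} plus orthogonality and skew-symmetry of $\phi$ to eliminate the residual $(X\ln f)$ terms (your anticipated $\cos^2\theta_i$, $\sin^2\theta_i$ bookkeeping from Lemmas 3.3--3.4 never actually arises in this lemma). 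One caveat you inherit from the paper itself: under the stated hypothesis $X,Z\in\Gamma(M_1)$, $Y\in\Gamma(M_2)$ one has $\omega Z=0$, since $M_1$ is the leaf of the invariant distribution $D$, so the identity is degenerately $0=0$; the paper's proof (and the nontrivial version actually needed, cf. Lemma 5.2) takes $X\in\Gamma(M_1)$ and $Y,Z\in\Gamma(M_2)$, for which your computation still goes through, with the $\phi$-terms cancelling pairwise via $g(\phi Y,Z)=-g(Y,\phi Z)$.
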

\begin{proof}
	For any  $X,Y \in \Gamma(M_1)$ and $Z \in \Gamma(M_2)$, we have
	\begin{eqnarray*}
		g(\sigma(X,Y),\omega Z) &=& g(\bar{\nabla}_XY,\omega Z)\\ &=& g(\bar{\nabla}_XY,JZ) - g(\bar{\nabla}_XY,\phi Z).
	\end{eqnarray*}
Using (\ref{2.2}), (\ref{2.3}) and the orthogonality of vector fields given in condition $(c)$ of Definition 3.1, we find
\begin{eqnarray*}
		g(\sigma(X,Y),\omega Z) = -g(\bar{\nabla}_XJY,Z) + g(\bar{\nabla}_X \phi Z,Y).
\end{eqnarray*}
Using (\ref{2.4}), (\ref{3.1}), (\ref{3.2}), (\ref{5.1}) and orthoganality of vector fields, we obtain
\begin{eqnarray*}
		g(\sigma(X,Y),\omega Z) &=& - g(\bar{\nabla}_X\phi QY,Z) - g(\bar{\nabla}_X\omega QY,Z) - g(\bar{\nabla}_X\phi RY,Z)\\&& - g(\bar{\nabla}_X\omega RY,Z) -(X ln f)g(\phi Z,Y).\\&=& -g(\bar{\nabla}_X\phi Y,Z) - g(\bar{\nabla}_X\omega QY,Z) - g(\bar{\nabla}_X\omega RY,Z). 
\end{eqnarray*}
Thus, using (\ref{2.5}) and (\ref{5.1}), we arrive at
\begin{eqnarray*}
		g(\sigma(X,Y),\omega Z) &=& g(\nabla_XZ,\phi Y) + g(A_{\omega QY}X,Z) + g(A_{\omega RY}X,Z) \\ &=& 
		(X ln f)g(Z, \phi Y) + g(\sigma(X,Z),\omega QY) + g(\sigma(X,Z),\omega RY).
\end{eqnarray*}
Using the orthoganality of vector fields, we get 
\begin{align*}
		g(\sigma(X,Y),\omega Z) =  g(\sigma(X,Z),\omega QY) + g(\sigma(X,Z),\omega RY).
\end{align*}
Hence, the proof of lemma follows.
\end{proof}
\begin{lemma}
Let $M = M_1 \times_fM_2$, where $M_2 = M_{\theta_1} \times M_{\theta_2}$ be a warped product quasi bi-slant submanifold of a Kaehler manifold $\bar{M}$. Then, we have
	\begin{align}
	g(\sigma(X,Z),\omega W) = g(\sigma(X,W),\omega QZ) + g(\sigma(X,W),\omega RZ),
\end{align}
	for any  $X\in \Gamma(M_1)$ and $Z, W \in \Gamma(M_2)$.
\end{lemma}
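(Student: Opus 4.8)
The plan is to follow the template of the proof of Lemma 5.1 almost verbatim, the only substantive change being how the warping-function contribution is disposed of. First I would use the Gauss formula (\ref{2.4}) to write $g(\sigma(X,Z),\omega W)=g(\bar{\nabla}_X Z,\omega W)$, since $\sigma(X,Z)$ is exactly the normal part of $\bar{\nabla}_X Z$ and $\omega W$ is normal. Replacing $\omega W=JW-\phi W$ from (\ref{3.2}) then splits the right-hand side as $g(\bar{\nabla}_X Z,JW)-g(\bar{\nabla}_X Z,\phi W)$.

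Next I would move $J$ across the connection. By the Hermitian compatibility (\ref{2.2}), the relation $J^2=-I$ from (\ref{2.1}), and the Kaehler condition (\ref{2.3}), the first piece becomes $-g(\bar{\nabla}_X JZ,W)$. Since $Z\in\Gamma(M_2)$ has $PZ=0$, I would decompose $JZ=\phi QZ+\omega QZ+\phi RZ+\omega RZ$ as in (\ref{3.4}) and treat the four summands separately. The tangential slant parts $\phi QZ,\phi RZ$ again lie in $\Gamma(M_2)$ (each slant distribution is $\phi$-invariant, as follows from condition $(c)$ of Definition 3.1 and the invariance of $D$), so formula (\ref{5.1}) gives $g(\bar{\nabla}_X\phi QZ,W)+g(\bar{\nabla}_X\phi RZ,W)=(X\ln f)\,g(\phi Z,W)$, the second fundamental form dropping out against the tangent vector $W$. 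The normal parts $\omega QZ,\omega RZ$ are handled with the Weingarten formula (\ref{2.5}) together with (\ref{2.6}), producing precisely the two terms $g(\sigma(X,W),\omega QZ)+g(\sigma(X,W),\omega RZ)$ that appear on the right-hand side of the claim. The leftover term $-g(\bar{\nabla}_X Z,\phi W)$ reduces, again by (\ref{5.1}) and $\phi W\in\Gamma(M_2)$, to $-(X\ln f)\,g(Z,\phi W)$.

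Assembling these contributions leaves $g(\sigma(X,Z),\omega W)=-(X\ln f)\big[g(\phi Z,W)+g(Z,\phi W)\big]+g(\sigma(X,W),\omega QZ)+g(\sigma(X,W),\omega RZ)$. The one step I expect to be the genuine obstacle is killing the warping bracket. In Lemma 5.1 the corresponding term vanished by orthogonality, because there $\phi Y$ sat in the invariant factor $M_1$ and was perpendicular to the fibre vector in $\Gamma(M_2)$; here both $\phi Z$ and $\phi W$ lie in $M_2$, so that argument is no longer available. Instead the bracket vanishes because the tangential operator $\phi$ is skew-symmetric with respect to $g$, that is $g(\phi Z,W)=-g(Z,\phi W)$, which follows from (\ref{2.1}) and (\ref{2.2}) (and is consistent with $\phi^2=-\cos^2\theta\,I$ recorded in Lemmas 3.3 and 3.4). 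This cancellation removes the warping term and leaves exactly the asserted identity, completing the proof.
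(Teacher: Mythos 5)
Your proof is correct, and structurally it follows the same skeleton as the paper's proof: Gauss formula, the split $\omega W = JW - \phi W$, moving $J$ across $\bar{\nabla}$ via the Kaehler condition, decomposing $JZ = \phi QZ + \omega QZ + \phi RZ + \omega RZ$, Weingarten formula for the $\omega$-parts, and (\ref{5.1}) for the tangential parts. The genuine difference is at the decisive step, and there your treatment is the more careful one. The paper writes the leftover warping terms as $-(\phi QZ\ln f)\,g(X,W) - (\phi RZ\ln f)\,g(X,W) - (Z\ln f)\,g(X,W)$ and kills them by ``orthogonality of vector fields'' (i.e.\ $g(X,W)=0$); but this rests on a misapplication of (\ref{5.1}): for $X\in\Gamma(TM_1)$ and a fibre field $V\in\Gamma(TM_2)$ that formula gives $\nabla_X V=(X\ln f)V$, hence $g(\bar{\nabla}_X\phi QZ,W)=(X\ln f)\,g(\phi QZ,W)$, an inner product of two fibre vectors with no reason to vanish on its own. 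As you correctly compute, the three surviving terms assemble into $-(X\ln f)\bigl[g(\phi Z,W)+g(Z,\phi W)\bigr]$ and cancel only jointly, through the skew-symmetry of $\phi$. One caution: this cancellation contradicts the paper's (\ref{2.9}), which asserts $g(TX,Y)=g(X,TY)$; but (\ref{2.9}) is itself misstated, since (\ref{2.2}) gives $g(JX,Y)=-g(X,JY)$ and comparing tangential components yields $g(\phi X,Y)=-g(X,\phi Y)$, so your version is the one that stands (were (\ref{2.9}) true as printed, the lemma itself would fail). Finally, your explicit justification that $\phi QZ$ and $\phi RZ$ remain tangent to $M_2$ (via condition (c) of Definition 3.1 and the invariance of $D$) fills in a point the paper uses silently; in effect your write-up repairs the paper's argument rather than merely reproducing it.
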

\begin{proof}
	For any  $X \in \Gamma(M_1)$ and $Z, W \in \Gamma(M_2)$, we have
\begin{eqnarray*}
	g(\sigma(X,Z),\omega W)&=&g(\bar{\nabla}_XZ,JW) - g(\nabla_XZ,\phi W)\\ &=& - g(\bar{\nabla}_XJZ,W) -  g(\nabla_XZ,\phi W).	
\end{eqnarray*} 
Using (\ref{2.4}), (\ref{3.1}) and (\ref{3.2}), we obtain
\begin{eqnarray*}
		g(\sigma(X,Z),\omega W)&=&- g(\bar{\nabla}_X\phi QZ,W) - g(\bar{\nabla}_X\omega QZ,W) - g(\bar{\nabla}_X\phi RZ,W) \\&&- g(\bar{\nabla}_X\omega RZ,W) - g(\nabla_XZ,\phi W). 
\end{eqnarray*}
On further simplification, we arrive at
\begin{small} 
\begin{eqnarray*}
		g(\sigma(X,Z),\omega W)&=& -(\phi QZ ln f)g(X,W) + g(A_{\omega QZ}X,W) - (\phi RZ ln f)g(X,W) \\&&+ g(A_{\omega RZ}X,W) - (Z ln f)g(X,W).
\end{eqnarray*}
\end{small}
Using orthogonality of vector fields, we get
\begin{align*}
		g(\sigma(X,Z),\omega W) =  g(A_{\omega QZ}X,W) +  g(A_{\omega RZ}X,W).
\end{align*}
Using (\ref{2.6}), we have 
\begin{align*}
	g(\sigma(X,Z),\omega W) = g(\sigma(X,W),\omega QZ) + g(\sigma(X,W),\omega RZ). 
\end{align*}
Hence, the proof of lemma is complete.
\end{proof}
\section{Main Result}
\begin{theorem}
	Let $M = M_1 \times_f M_2$ where $M_2 = M_{\theta_1} \times M_{\theta_2}$ be a warped product quasi bi-slant submanifold with bi-slant angles $\{ \theta_1, \theta_2\}$ in a Kaehler manifold $\bar{M}$. Then one of the following two cases must occur: 
\par	(1) The warping function $f$ is constant i.e., $M$ is Riemannian product;
\par    (2) $ \theta_2 = \frac{\pi}{2}$, i.e., $M$ is a warped product quasi hemi-slant submanifold such that $M_{\theta_2}$ is a totally real submanifold $M_{\perp}$ of $\bar{M}$.
\end{theorem}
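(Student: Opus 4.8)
The plan is to pin down the mixed components $g(\sigma(X,Z),\omega W)$ of the second fundamental form and squeeze out of them a single scalar identity that decides between the two cases. Throughout I take $X$ tangent to the base $M_1$; since $M_1$ carries the invariant distribution $D$, the vector $JX=\phi X$ is again tangent to $M_1$, a fact I will use repeatedly. The fibre directions will be taken in the second slant factor, $Z,W\in\Gamma(D_2)=\Gamma(TM_{\theta_2})$, so that the slant identities of Lemma~3.4 are available.

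First I would produce a closed formula for $g(\sigma(X,Z),\omega W)$. The cleanest route is to evaluate $g(\bar{\nabla}_ZX,\phi W)$ in two different ways. Expanding $\bar{\nabla}_ZX$ by Gauss (\ref{2.4}) and the warped product formula (\ref{5.1}), which gives $\nabla_ZX=(X\ln f)Z$, yields $g(\bar{\nabla}_ZX,\phi W)=(X\ln f)\,g(Z,\phi W)$ (the $\sigma$-term drops since $\phi W$ is tangent). On the other hand, writing $\phi W=JW-\omega W$, moving $J$ across the metric and the connection by (\ref{2.2}) and (\ref{2.3}), using $JX=\phi X$ together with (\ref{5.1}) in the form $\nabla_Z\phi X=((\phi X)\ln f)Z$, and converting the $\omega W$ term through Weingarten (\ref{2.5}), yields $g(\bar{\nabla}_ZX,\phi W)=-((\phi X)\ln f)\,g(Z,W)-g(\sigma(X,Z),\omega W)$. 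Equating the two evaluations gives
\begin{equation}\label{planC}
g(\sigma(X,Z),\omega W)=-((\phi X)\ln f)\,g(Z,W)-(X\ln f)\,g(Z,\phi W),
\end{equation}
valid for all $X\in\Gamma(D)$ and $Z,W\in\Gamma(D_2)$; the skew-symmetry of $\phi$ and the orthogonality built into Definition~3.1 are what make every other term cancel.

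The decisive move is to symmetrise (\ref{planC}) in $Z$ and $W$. Reading Lemma~5.2 with $Z\in\Gamma(D_2)$ (so $QZ=0$, $RZ=Z$) gives $g(\sigma(X,Z),\omega W)=g(\sigma(X,W),\omega Z)$. Substituting (\ref{planC}) into both sides, the symmetric term carrying $(\phi X)\ln f$ drops out and, after using $g(W,\phi Z)=-g(Z,\phi W)$, one is left with $(X\ln f)\,g(Z,\phi W)=0$ for all such $Z,W$. Taking $W=\phi Z$ and invoking $\phi^2Z=-(\cos^2\theta_2)Z$ from Lemma~3.4 produces the target identity
\begin{equation}\label{planfinal}
(X\ln f)\,(\cos^2\theta_2)\,g(Z,Z)=0,\qquad X\in\Gamma(D),\ Z\in\Gamma(D_2).
\end{equation}

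From (\ref{planfinal}) the dichotomy is immediate. If $\theta_2\neq\frac{\pi}{2}$ on some open set, then $\cos^2\theta_2\neq0$ there, forcing $X\ln f=0$ for every $X$ tangent to $M_1$; since $M_1$ is connected, $f$ is constant and $M$ reduces to the Riemannian product $M_1\times M_2$, which is case~(1). Otherwise $\cos^2\theta_2\equiv0$, i.e.\ $\theta_2=\frac{\pi}{2}$, so $D_2$ is anti-invariant, $M_{\theta_2}$ is a totally real factor $M_{\perp}$, and $M$ is a warped product quasi hemi-slant submanifold, which is case~(2). The only real obstacle I foresee is bookkeeping: deriving (\ref{planC}) in this clean form demands that each application of (\ref{5.1}) correctly distinguish base from fibre directions and that $J^2=-I$, $\bar{\nabla}J=0$ and the slant relations of Lemmas~3.3 and 3.4 be applied so that all terms beyond the two displayed collapse; once (\ref{planC}) is in hand, the symmetrisation and the substitution $W=\phi Z$ are short and formal.
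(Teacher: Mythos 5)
Your proof is correct and takes essentially the same route as the paper's: your key identity for $g(\sigma(X,Z),\omega W)$ is exactly the paper's equation (6.2) specialized to $X\in\Gamma(D)$ (where $QX=RX=0$, so the extra projection terms the paper carries simply vanish), and both arguments then invoke the symmetry $g(\sigma(X,Z),\omega W)=g(\sigma(X,W),\omega Z)$ from Lemma 5.2 together with the substitution $W\mapsto\phi W$ and Lemma 3.4 to reach $(\cos^2\theta_2)(X\ln f)\,g(Z,W)=0$ and the resulting dichotomy. The only differences are presentational: you restrict to $Z,W\in\Gamma(D_2)$ from the outset and organize the computation as evaluating $g(\bar{\nabla}_ZX,\phi W)$ in two ways, which is the same calculation the paper performs with all projections kept.
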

\begin{proof}
	Let $M = M_1 \times_f M_2$ where $M_2 = M_{\theta_1} \times M_{\theta_2}$ be a warped product quasi bi-slant submanifold of a Kaehler manifold $\bar{M}$ with bi-slant angles $\{\theta_1,\theta_2\}$. Then, we have
	\begin{align}\label{6.1}
		g(\sigma(X,Z),\omega W) = g(\bar{\nabla}_ZX,JW) - g(\nabla_ZX,\phi W),
	\end{align}
for any $X \in \Gamma(M_1)$ and $Z,W \in \Gamma(M_2)$. Thus, by using (\ref{2.1})-(\ref{2.4}) and (\ref{5.1}), we obtain
\begin{eqnarray*}
	g(\sigma(X,Z),\omega W) &=& -g(\bar{\nabla}_ZJX,W) - (X ln f)g(Z,\phi W) \\ &=& -g(\bar{\nabla}_Z\phi PX,W) - g(\bar{\nabla}_Z\phi QX,W) - g(\bar{\nabla}_Z\omega QX,W) \\ && - g(\bar{\nabla}_Z\phi RX,W) - g(\bar{\nabla}_Z\omega RX,W) - (X ln f)g(Z,\phi W).  
\end{eqnarray*}
Thus, it follows from (\ref{2.5}) and (\ref{5.1})
\begin{eqnarray*}
g(\sigma(X,Z),\omega W) &=& -(\phi PX ln f)g(Z,W) - (\phi QX ln f)g(Z,W) \\ && + g(A_{\omega QX}Z,W) - (\phi RX ln f)g(Z,W) \\ && + g(A_{\omega RX}Z,W) - (X ln f)g(Z,\phi W).	
\end{eqnarray*}
Using (\ref{2.6}), we arrive at
\begin{eqnarray}\label{6.2}
	g(\sigma(X,Z),\omega W) &=& -(\phi PX ln f)g(Z,W) - (\phi QX ln f)g(Z,W)\nonumber \\ && + g(\sigma(Z,W), \omega QX) - (\phi RX ln f)g(Z,W) \nonumber\\ && +  g(\sigma(Z,W), \omega RX) - (X ln f)g(Z,\phi W).	
\end{eqnarray}
Interchanging $Z$ by $W$ in (\ref{6.2}) and using (\ref{2.2}), we get
\begin{eqnarray}\label{6.3}
	g(\sigma(X,W),\omega Z) &=& -(\phi PX ln f)g(Z,W) - (\phi QX ln f)g(Z,W)\nonumber \\ && + g(\sigma(Z,W), \omega QX) - (\phi RX ln f)g(Z,W)\nonumber \\ && +  g(\sigma(Z,W), \omega RX) + (X ln f)g(Z,\phi W).	
\end{eqnarray}
Subtracting (\ref{6.2}) from (\ref{6.3}) and by applying Lemma 5.2 , we arrive at 
\begin{align}\label{6.4}
	(X ln f)g(Z,\phi W) = 0.
\end{align}
Again, after interchanging $W$ by $\phi W$ in (\ref{6.4}) and using Lemma 3.4, we get
\begin{align*}
	(\cos^2\theta_2)(X ln f)g(Z,W) = 0.
\end{align*}
Therefore, either $f$ is constant or $\cos \theta_2 = 0$ holds. Consequently, either $M$ is a Riemannian product manifold or $\theta_2 = \frac{\pi}{2}$. In the second case, $M$ is a warped product quasi hemi-slant submanifold which hasbeen studied in \cite{prasad2}.
\end{proof}

\section{Some examples on warped product quasi bi-slant submanifolds of Kaehler manifold.}
\begin{example}
	Let $\mathbb{E}^{2n}$ be the Euclidean $2n$-space with the standard metric and let $\mathbb{C}^n$ denotes the complex Euclidean $n$-space $(\mathbb{E}^{2n},J)$ equipped with the canonical complex structure $J$ defined as 
	\begin{align*}
		J\bigg(\frac{\partial}{\partial x_i}\bigg) = \frac{\partial}{\partial y_i}, \hspace*{0.5cm} 	J\bigg(\frac{\partial}{\partial y_j}\bigg) = -\frac{\partial}{\partial x_j}, 1\leq i,j \leq n. 
	\end{align*}
Consider a submanifold $M$ of $\mathbb{C}^6$ defined by
\begin{eqnarray*}
\chi (u,v,w,r,s,t) &=& (u\cos \theta_1, v\cos \theta_1, u\sin \theta_1, v\sin \theta_1, w\cos \theta_2, r\cos \theta_2,\\&&  w\sin \theta_2,  r\sin \theta_2, -u-w+v+r, u+w+v+r, s, t ).	
\end{eqnarray*}
It is easy to see that the tangent bundle $TM$ of $M$ is spanned by the following vectors
\begin{eqnarray*}
	Z_1 = \cos \theta_1 \frac{\partial}{\partial x_1} + \sin \theta_1 \frac{\partial}{\partial x_2} - \frac{\partial}{\partial x_5} + \frac{\partial}{\partial y_5},
\end{eqnarray*}
\begin{eqnarray*}
	Z_2 = \cos \theta_1 \frac{\partial}{\partial y_1} + \sin \theta_1 \frac{\partial}{\partial y_2} + \frac{\partial}{\partial x_5} + \frac{\partial}{\partial y_5},
\end{eqnarray*}
\begin{eqnarray*}
	Z_3 = \cos \theta_2 \frac{\partial}{\partial x_3} + \sin \theta_2 \frac{\partial}{\partial x_4} - \frac{\partial}{\partial x_5} +  \frac{\partial}{\partial y_5},
\end{eqnarray*}
\begin{eqnarray*}
	Z_4 = \cos \theta_2 \frac{\partial}{\partial y_3} + \sin \theta_2 \frac{\partial}{\partial y_4} + \frac{\partial}{\partial x_5} +  \frac{\partial}{\partial y_5},
\end{eqnarray*}
\begin{eqnarray*}
	Z_5 = \frac{\partial}{\partial x_6}, \hspace*{0.5cm} Z_6 = \frac{\partial}{\partial y_6}.
\end{eqnarray*}
Then , clearly we obtain
\begin{eqnarray*}
	JZ_1 =  \cos \theta_1 \frac{\partial}{\partial y_1} + \sin \theta_1 \frac{\partial}{\partial y_2} - \frac{\partial}{\partial y_5} - \frac{\partial}{\partial x_5},
\end{eqnarray*}
\begin{eqnarray*}
	JZ_2 = -\cos \theta_1 \frac{\partial}{\partial x_1} - \sin \theta_1 \frac{\partial}{\partial x_2} + \frac{\partial}{\partial y_5} - \frac{\partial}{\partial x_5},
\end{eqnarray*}
\begin{eqnarray*}
	JZ_3 = \cos \theta_2 \frac{\partial}{\partial y_3} + \sin \theta_2 \frac{\partial}{\partial y_4} - \frac{\partial}{\partial y_5} - \frac{\partial}{\partial x_5},
\end{eqnarray*}
\begin{eqnarray*}
	JZ_4 = - \cos \theta_2 \frac{\partial}{\partial x_3} - \sin \theta_2 \frac{\partial}{\partial x_4} + \frac{\partial}{\partial y_5} - \frac{\partial}{\partial x_5},
\end{eqnarray*}
\begin{eqnarray*}
	JZ_5 = \frac{\partial}{\partial y_6}, \hspace*{0.5cm} JZ_6 = - \frac{\partial}{\partial x_6}.
\end{eqnarray*}
Then, we find that $D = span\{Z_5,Z_6\}$ is an invariant distribution, $D_1 = span\{Z_1,Z_2\}$ is a proper  slant distribution with slant angle $\theta_1 = \cos^{-1}(\frac{1}{3})$ and $D_2 = span \{Z_3,Z_4\}$ is again a proper slant distribution with slant angle $\theta_2 = \cos^{-1}(\frac{1}{3})$. Hence the submanifold $M$ defined by $\chi$ is a proper quasi bi-slant submanifold of $\mathbb{C}^6$.
\par It is easy to verify that $D$ and $D_1 \oplus D_2$ are integrable. If we denote the integrable manifolds of $D$, $D_1$ and $D_2$ by $M_T$, $M_{\theta_1}$ and $M_{\theta_2}$, respectively. Then the metric tensor $g$ of product manifold $M$ is given by 
\begin{eqnarray*}
	ds^2 &=& ds^2 + dt^2 + 3(du^2 + dv^2) + 3(dw^2 + dr^2)\\ &=& g_{M_T} + 3g_{M_2},
\end{eqnarray*}
such that,
\begin{align*}
	g_{M_T} = ds^2 + dt^2 \hspace{0.5cm} and \hspace{0.5cm} g_{M_2} = 3(du^2 + dv^2) + dw^2 + dr^2,
\end{align*}
where $M_2 = M_{\theta_1} \times M_{\theta_2}$. In this case the warping function $f= \sqrt{3}$ a constant, and hence $M$ is simply a Riemannian product.

\end{example}
\begin{example}
 Consider a submanifold $M$ of $\mathbb{C}^5$ defined by 
	\begin{align*}
		\chi(u,v,w,s,t) = (v\cos u, w\cos u, v\sin u, w\sin u, -v+w, v+w, 0, 0, s, t),
	\end{align*}
	with almost complex structure $J$ defined by 
\begin{align*}
	J\bigg(\frac{\partial}{\partial x_i}\bigg) = \frac{\partial}{\partial y_i}, \hspace*{0.5cm} 	J\bigg(\frac{\partial}{\partial y_j}\bigg) = -\frac{\partial}{\partial x_j}, 1\leq i,j \leq 5. 
\end{align*}
	It is easy to see that its tangent space $TM$ of $M$ is spanned by the following vectors
	\begin{align*}
		v_1 = -v\sin u \frac{\partial}{\partial x_1} + v\cos u \frac{\partial}{\partial x_2} - w\sin u\frac{\partial}{\partial y_1} + w\cos u \frac{\partial}{\partial y_2},
	\end{align*}
	\begin{align*}
	v_2 = \cos u \frac{\partial}{\partial x_1} + \sin u  \frac{\partial}{\partial x_2} - \frac{\partial}{\partial x_3} + \frac{\partial}{\partial y_3},
\end{align*}
\begin{align*}
	v_3 = \frac{\partial}{\partial x_3} + \cos u \frac{\partial}{\partial y_1} + \sin u \frac{\partial}{\partial y_2} + \frac{\partial}{\partial y_3},
	\end{align*}
	\begin{align*}
		v_4 = \frac{\partial}{\partial x_5}, \hspace*{0.5cm} v_6 = \frac{\partial}{\partial y_5}.
	\end{align*}

Then, we have 
	\begin{align*}
	Jv_1 = -v\sin u \frac{\partial}{\partial y_1} + v \cos u \frac{\partial}{\partial y_2} + w \sin u \frac{\partial}{\partial x_1} - w \cos u  \frac{\partial}{\partial x_2},
\end{align*}
\begin{align*}
	Jv_2 = \cos u \frac{\partial}{\partial y_1} + \sin u  \frac{\partial}{\partial y_2} -  \frac{\partial}{\partial y_3} -  \frac{\partial}{\partial x_3},
\end{align*}
\begin{align*}
	Jv_3 = \frac{\partial}{\partial y_3} - \cos u \frac{\partial}{\partial x_1} - \sin u  \frac{\partial}{\partial x_2 } - \frac{\partial}{\partial x_3}.
\end{align*}
\begin{align*}
		Jv_4 = \frac{\partial}{\partial y_5}, \hspace*{0.5cm} Jv_5 = - \frac{\partial}{\partial x_5}.
\end{align*}
Let us put $D = span\{v_4,v_5\}$ is an invariant distribution, $D_1 = span \{v_2,v_3\}$ a proper slant distribution with slant angle $\theta_1 = \cos^{-1}\big(\frac{1}{3}\big)$ and $D_2 = span \{v_1\}$ an anti-invariant distribution with slant angle $\theta_2 = \frac{\pi}{2}$. Hence the submanifold $M$ defined by $\chi$ is a quasi bi-slant submanifold of $\mathbb{C}^5$.
\par It is easy to verify that $D$ and $D_1 \oplus D_2$ are integrable. If we denote the integrable manifolds of $D$, $D_1$ and $D_2$ by $M_T$, $M_{\theta_1}$ and $M_{\perp}$, respectively. Then the metric tensor $g$ of product manifold $M$ is given by 
\begin{align*}
	ds^2 = ds^2 + dt^2 + 3(dv^2 + dw^2) + (v^2 + w^2)du^2,
\end{align*}
such that
\begin{align*}
	g_{M_T} = ds^2+dt^2 \hspace{0.5cm} and \hspace{0.5cm} g_{M_2} = 3(dv^2+dw^2) + du^2,
\end{align*}
where $M_2 = M_{\theta_1} \times M_{\perp}$. In this case the warping function $f= \sqrt{v^2+w^2}$ and hence $M$ is a case of warped product quasi hemi-slant submanifold. So we have discussed both the case of Theorem 6.1.
 \end{example}

\textbf{Data Availability Statement}: The authors declare that this research is purely theoretical and does not associate with any datas.\\
\textbf{Conflicts of Interest}: The authors declare that they have no conflict of interest, regarding the publication of
this paper.

\end{document}